\renewcommand{\@biblabel}[1]{\quad#1.}
\date{}
\newtheorem{notation}{Notation}
\newtheorem{definition}{Definition}
\newtheorem{theorem}{Theorem}
\newtheorem{corollary}{Corollary}
\newtheorem{remark}{Remark}
\newtheorem{lemma}{Lemma}
\newtheorem{algorithm}{Algorithm}
\newtheorem{proof}{Proof}
\begin{document}

\begin{flushleft}
{\Large
\textbf{Convergence Time Towards Periodic Orbits in Discrete Dynamical Systems}
}
\\
Jes\'us San Mart\'in$^{1,\dagger}$, 
Mason A. Porter$^{2,\ast}$
\\
\bf{1} Escuela T\'ecnica Superior de Ingenier\'ia y Dise\~{n}o Industrial (ETSIDI), Universidad Polit\'ecnica de Madrid, Ronda de Valencia 3, Madrid, Spain
\\
\bf{2} Oxford Centre for Industrial and Applied Mathematics, Mathematical Institute, University of Oxford, Oxford, UK
\\
$\dagger$ E-mail: jesus.sanmartin@upm.es
\\
$\ast$ E-mail: porterm@maths.ox.ac.uk
\end{flushleft}


\section*{Abstract}

We investigate the convergence towards periodic orbits in discrete dynamical systems. We examine the probability that a randomly chosen point converges to a particular neighborhood of a periodic orbit in a fixed number of iterations, and we use linearized equations to examine the evolution near that neighborhood.  The underlying idea is that points of stable periodic orbit are associated with intervals. We state and prove a theorem that details what regions of phase space are mapped into these intervals (once they are known) and how many iterations are required to get there.  We also construct algorithms that allow our theoretical results to be implemented successfully in practice.







\section*{Introduction}\label{intro}

Periodic orbits are the most basic oscillations of nonlinear systems, and they also underlie extraordinarily complicated recurrent dynamics such as chaos
 \cite{scholarpedPO,chaosbook,Poincare1892,Auerbach1987,Artuso1990}.  Moreover, they occur ubiquitously in applications throughout the sciences and engineering.  It is thus important to develop a deep understanding of periodic dynamics.

It is important and common to question how long it takes a point in phase space to reach a stable periodic orbit from an arbitrary initial condition.  When studying synchronization and other forms of collective behavior, it is crucial to examine not only the existence of stable periodic orbits but also the time that it takes to converge to such dynamics in both natural and human-designed systems \cite{Ermentrout2010,Strogatz2000,Strogatz1994}.  For example, it is desirable to know how long it will take an engineered system that starts from an arbitrary initial condition to achieve the regular motion at which it is designed to work \cite{Lellis2013,Yu2012}.  A system can also be perturbed from regular motion by accident, and it is important to estimate how long it will take to return to regular dynamics.  Similar questions arise in physics \cite{Valtaoja2000,Kreilos2012}, biology \cite{Neufeld2012,Ermentrout2010,cowsync}, and many other areas. It is also important to consider the time to synchronize networks \cite{qi08,grabow10,nishikawa10} and to examine the convergence properties of algorithms for finding periodic orbits \cite{kleb01,chaosbook}.
 
To study the problem of convergence time to periodic orbits, let's first consider the Hartman-Grobman Theorem \cite{Grobman1959,Hartman1960}, which states that the flow of a dynamical system (i.e., a vector field) near a hyperbolic equilibrium point is topologically equivalent to the flow of its linearization near this equilibrium point.  If all of the eigenvalues of the Jacobian matrix evaluated at an equilibrium have negative real parts, then this equilibrium point is reached exponentially fast when one is in a small neighborhood of it.  To determine convergence time to a hyperbolic equilibrium, we thus need to calculate how long it takes to reach a neighborhood of the equilibrium from an arbitrary initial condition.  After reaching the neighborhood, the temporal evolution is then governed by a linear dynamical system (which can be solved in closed form).  An analogous result holds for hyperbolic periodic orbits in vector fields \cite{Lan13}.  
To turn periodic orbits in vector fields into fixed points in maps, one can use Poincar\'e return maps, which faithfully capture properties of periodic orbits.  A Poincar\'e  map can be interpreted as a discrete dynamical system, so the problem of determining how long it takes to reach a hyperbolic stable periodic orbit from arbitrary initial conditions in a vector field is reduced to the problem of determining how long it takes to reach the neighborhood of a hyperbolic fixed point in a discrete dynamical system.

Our work considers how long it takes to reach a periodic orbit of a differential equation---starting from an arbitrary point in phase space---by using a Poincar\'e return map of its associated vector field.  For simplicity, suppose that a return map (which is built from a Poincar\'e section) is unimodal. If we approximate the unimodal Poincar\'e map by using a unimodal function $f(x)$, then we can use $f(x)$ in our algorithm to estimate the convergence time to the periodic orbit.  Periodic motion is ubiquitous in models (and in nature), and it is important to explore how long it takes to converge to such behavior.

In this paper, we prove a theorem for the rate of convergence to stable periodic orbits in discrete dynamical systems.  Our basic strategy is as follows. We define the neighborhood $I_p$ of  a hyperbolic fixed point, and we calculate what fraction $\omega$ of the entire phase space $I$ is mapped into $I_p$ after $q$ iterations.  Using $\mu(w)$ and $\mu(I)$, respectively, to denote the measures of $w$ and $I$, a point that is selected uniformly at random from $I$ has a probability of ${\mu(w)}/{\mu(I)}$ to reach $I_p$ in $q$ iterations. To illustrate our ideas, we will work with a one-dimensional (1D) discrete dynamical system $x_{n+1} = f(x_n;r)$ that is governed by a unimodal function $f$ and is parametrized by a real number $r$. We focus on unimodal functions for two primary reasons: (i) many important results in dynamical systems are based on such functions; and (ii) it is simpler to illustrate the salient ideas using them than with more complicated functions.

To determine the set that is mapped into $I_p$, we take advantage of the fact that points in periodic orbits are repeated periodically, so their corresponding neighborhoods must also repeat periodically.  In theory, an alternative procedure would be to iterate backwards from $I_p$, but this does not work because one cannot control successive iterations of $f^{-1}$.  The function $f$ is unimodal, so it is not bijective and in general one obtains multiple sets for each backward iteration of a single set. The number of sets grows geometrically, and one cannot in general locate them because an analytical expression for $f^{-1}$ is not usually available. 

To explain the main ideas of this paper and for the sake of simplicity, consider a stable periodic orbit $O_p$ of period $p$ that is born in $p$ saddle-node bifurcations of $f^p$. Every point  $x_i$ (with $i \in \{1, \ldots ,p\})$ of $O_p$ has a sibling point $x_i^*$ that is born in the same saddle-node bifurcation. Because $f^p(x_i)=x_i$ and $f^p(x_{i}^{*})= x_{i}^{*}$, it follows that $f^p(I_i)=I_i$, where $I_i=[ x_i, x_{i}^{*}]$. That is, $x_i$, $x_{i}^{*}$, and $I_i$ all repeat periodically. Roughly speaking, we will build the interval $I_p$ from the interval $I_i$.

Consider a plot in which points along the horizontal axis are mapped via $f$ to points along the vertical axis (as is usual for 1D maps). The orbit $O_p$ is periodic with period $p$, so $x_j \in O_p$ implies that $\{(x_j, f^q(x_j)),\; q=0,1,\ldots \}$ yields $p$ periodic points with a horizontal axis location of $x_j$.  We say that these points are located in the ``column" $x_j$. Because $f^q(x_j )=x_i\in O_p$ for some $q$, we obtain $p$ points located in the same column $x_j$. These points are given by $\{(x_j, f^q(x_j))=(x_j,x_i),\; i=1,\ldots , p  \}$. As we have indicated above, each point $(x_j,x_i)$ is associated with an interval $I_i$.  No matter how many iterations we do, the fact that the orbit is periodic guarantees that there are exactly $p$ intervals in the same place (where the points $(x_j,x_i)$ are located).   We thereby know the exact number and locations of all intervals.

To complete the picture, we must also take into account that if there exists an interval $W_{q_{ij}}$ such that $f^q(W_{q_{ij}})=I_i$, then any point of $W_{q_{ij}}$ will reach a point of $I_i$ in at most $q$ iterations. The geometric construction above yields the interval $W_{q_{ij}}$, as one can see by  drawing a pair of parallel line segments that intersect both $f^q$ and the endpoints of the interval $I_i$.  We will approximate $f^q$ by a set of such line segments so that we can easily calculate the intersection points.

The remainder of this paper is organized as follows. First, we give definitions and their motivation. We then prove theorems that indicate how long it takes to reach the interval $I_i$ from an arbitrary initial condition. We then construct algorithms to implement the results of the theorems. Finally, we discuss a numerical example and then conclude.


\section*{Definitions}\label{def}

Consider the discrete dynamical system
\begin{equation}
	x_{n+1}=f(x_{n};r)\,,\qquad f:I\rightarrow I\,, \qquad I=[a,b]\,, \label{eq-1}
\end{equation}
where $f(x;r)$ is a one-parameter family of unimodal functions with negative Schwarzian derivative and a critical point at $x = C$. Without loss of generality, we suppose that there is a (both local and global) maximum at $C$.  At a critical point of a map $f$, either $f^\prime=0$ (as in the logistic map) or $f^{\prime}$ does not exist (as in the tent map). Some of the results of this paper related with critical points only require continuous functions, which is a much weaker condition than the requirement of a negative Schwarzian derivative.

\begin{remark}\label{rem:schw_der}
Because $f$ has a negative Schwarzian derivative, $f^q$ does as well (because it is a composition of functions with negative Schwarzian derivatives). By using the chain rule, we obtain $f^{q\prime}=0$ only at extrema. Therefore,  $f^{q\prime}\neq 0$ between consecutive extrema. The Minimum Principle \cite{Brin2002} for a function with negative Schwarzian derivative then guarantees that there is only one point of inflection between two consecutive extrema of $f^q$. If there were more than one point of inflection, then $f^{q\prime\prime}=0$ at least two points. One of them would be a maximum of $f^{q\prime}$, and the other one would be a minimum. This contradicts the Minimum Principle. Consequently, the graph of $f^q$ between two consecutive extrema has a sigmoidal shape (i.e., it looks like $\left\lmoustache{}\right.$ or $\left.{}\right\rmoustache$), which becomes increasingly steep as $q$ becomes larger.  
This fact makes it possible to approximate $f^q$ between two consecutive extrema by a line segment near the only point of inflection that is located between two consecutive extrema.
\end{remark}

Because the Schwarzian derivative of $f$ is negative, Singer's Theorem  \cite{Singer1978} ensures that the system (\ref{eq-1}) has no more than one stable orbit for every fixed value of the parameter $r$. Additionally, the system (\ref{eq-1}) exhibits the well-known Feigenbaum cascade \cite{Myrberg63,Feigenbaum78,Feigenbaum79}, which we show in the bifurcation diagram in Fig.~\ref{fig:diagrama}. 

For a particular value of the parameter $r$, the map $f^p$ has $p$ simultaneous saddle-node (SN) bifurcations, which result in an SN $p$-periodic orbit. As $r$ is varied, the SN orbit bifurcates into a stable orbit $\{ S_i \}_{i=1}^p$ and an unstable orbit $\{ U_i \}_{i=1}^p$. The points $S_i$ and $U_i$ are, respectively, the node and the saddle generated in an SN bifurcation, so $U_i$ is the nearest unstable point to $S_i$ (see Fig.~\ref{fig:figura1b}). In other words, the points in the stable orbits (called ``node orbits") are node points, whereas the points in the unstable orbit (called ``saddle orbits") are saddle points. From Remark \ref{rem:schw_der}, we know that the neighborhoods of these points are concave or convex.

The derivative of $f^p$ is 1 at the fixed point where the SN bifurcation takes place. As one varies $r$, the derivative evaluated at that bifurcation point changes continuously from $1$ to $-1$.  When the derivative is $-1$, the stable orbit (i.e., the node orbit) undergoes a period-doubling bifurcation. As a result, the stable orbit becomes unstable (yielding the orbit $\{ U_i \}_{i=1}^p$) and two new stable orbits ($\{ S_{i_{1}} \}_{i_1=1}^p$ and $\{ S_{i_{2}} \}_{i_2=1}^p$) appear. The points $S_{i_{1}}$ and $S_{i_{2}}$ are nodes, and the point $U_i$ is a saddle. From our geometric approach, the intervals $(S_{i_{1}}, U_i)$ and $(S_{i_{2}}, U_i)$ that are generated via the period-doubling bifurcation behave in the same way as the interval $(S_i, U_i)$ that was generated in the SN bifurcation. Therefore, we can drop the indices ``1" and ``2" and write $(S_i, U_i)$ for the orbits that arise from both the SN bifurcation and the period-doubling bifurcation.


\begin{notation}\label{not:not-U} Let $U_i^{\prime}$ denote the nearest point to $S_i$ that results from the intersection of the line $x_{n+1}=U_i$ with $f^p$ (see Figs.~\ref{fig:figura1b}, \ref{fig:figura1a}, and \ref{fig:figura1c}).
\end{notation}

\begin{definition}\label{def:cap-inter}
Consider the points $x_i$ and $x_i^{\prime}$ that satisfy $f^p(x_i)=U_i$ and $f^p(x_i^{\prime})=U_i^{\prime}$. If $f^p$ is concave (respectively, convex) in a neighborhood of $S_i$, we say that $I_{P_i}=(x_i,x_i^{\prime})$ [respectively, $I_{P_i}=(x_i^{\prime},x_i)$] is the {\tt $i$th capture interval} of the stable $p$-periodic orbit $\{ S_i  \}_{i=1}^p$ and that $\displaystyle I_P=\bigcup_i I_{P_i}$ is the {\tt (aggregate) capture interval} of the stable $p$-periodic orbit $\{ S_i  \}_{i=1}^p$.
\end{definition}

\vspace{.2cm}

\begin{notation}\label{not:not-IP} Let $I_{P_{i,C}}$ denote the subinterval $I_{P_i}$ that contains the critical point $C$.
\end{notation}

From Definition \ref{def:cap-inter}, we see for all $x\in I_{P_i}$ that $f^{np}(x) \in I_{P_i}$ and $f^{np}(x) \longrightarrow S_i$ as $n \longrightarrow \infty$. Iterations of points $x\in I_{P_i}$ are repelled from $U_i$ and $U_i^{\prime}$, and they are attracted to $S_i$. The system (\ref{eq-1}) is linearizable around the fixed points $S_i$ and $U_i$. (Observe that $f(U_i^{\prime})=U_i$, so we also have control over this point.) Consequently, the convergence of iterations of $x \in I_{P_i}$ to $S_i$ is governed by the eigenvalues of the Jacobian matrix $Df$.

Because we can control the evolution inside $I_{P_i}$, we can examine how long it takes to reach $I_{P_i}$ starting from an arbitrary point $x \in I$. As we will see below, to obtain this result, we need to discern which subintervals of $I$ are mapped by $f^q$ into $I_{P_i}$ for arbitrary $q$. The first step in this goal is to split the interval $I$ in which $f^q$ is defined into subintervals in which $f^q$ is monotonic.

\begin{definition}\label{def:monotonicity}
Let $A=\{q_2,q_3,\ldots,q_{k-1} | q_2<\ldots<q_{k-1} \}$ be the set of points at which $f^q$ has extrema. Let $B= \{ q_1=a, q_k=b\}$, and we recall that we are considering the interval $I=[a,b]$. We will call $P_{\mathrm{mon-r}}=A \cup B= \{ q_1,q_2,q_3,\ldots,q_{k-1},q_k | q_1<q_2<\ldots<q_{k-1}<q_k \}$ the {\tt partition of monotonicity} of $f^q$. We will call $I_{q_j}=[q_j,q_{j+1}]$ (where $j=1,\ldots,k-1$) the {\tt $j$th interval of monotonicity} of $f^q$.
\end{definition}

By construction, $\displaystyle I=[a,b]= \bigcup_j I_{q_j}$, and $f^q$ is monotonic in $I_{q_j}$.  As we will explain below, one can calculate intervals of monotonicity $I_{q_j}$ easily by using Lemmas \ref{lem:extrema} and \ref{lem:coordinates}.

Once we know the intervals in which $f^q$ is monotonic, it is easy to obtain subintervals of $I$ that are mapped by $f^q$ into $I_{P_i}$.


We proceed geometrically (see Figs.~\ref{fig:figura1b}, \ref{fig:figura1c}, and \ref{fig:figura1d}):
\begin{itemize}
\item[(i)]{draw parallel lines through the points $x_i^{\prime}$ and $ x_i$ (i.e., through the endpoints of $I_{P_i}$);} 
\item[(ii)]{obtain the points at which the lines intersect $f^q$;}
\item[(iii)]{calculate which points are mapped by $f^q$ into the intersection points of (ii), for which one uses the fact that $f^q$ is monotonic in $I_{q_j}=[q_j,q_{j+1}]$;}
\item[(iv)]{determine, using the points obtained in (iii), the interval that is mapped by $f^q$ into $I_{P_i}$.}
\end{itemize}

Using this geometric perspective, we make the following definitions.

\begin{definition}\label{def:W}
Let
\begin{equation}
	A_{ij}= 
	\left\{
		\begin{array}{c}
			(x_{i,q_j},x_{i,q_j}^{\prime})\,, \qquad \mbox{if} \quad x_{i,q_j}<x_{i,q_j}^{\prime}\\
			(x_{i,q_j}^{\prime},x_{i,q_j})\,, \qquad \mbox{if} \quad x_{i,q_j}>x_{i,q_j}^{\prime}\\
		\end{array}
	\right.\,,
\end{equation}	
where the points $x_{i,q_j}\,, x_{i,q_j}^{\prime} \in I_{q_j}=[q_j,q_{j+1}]$, and they satisfy $f^q(x_{i,q_j})=x_i$ and $f^q(x_{i,q_j}^{\prime})=x_i^{\prime}$.

\begin{itemize}
\item[(i)]{If $f^q$ does not have extrema in $A_{ij}$ (see Figs.~\ref{fig:figura1b} and \ref{fig:figura1c}), then we let
\begin{equation}
	W_{q_{ij}}=A_{ij}\,.
\end{equation}
}
\item[(ii)]{If $f^q$ has extrema in $A_{ij}$ (see Figs.~\ref{fig:figura1b} and \ref{fig:figura1d}), then we let\\
\begin{equation}
	W_{q_{ij}}= 
		\left\{
			\begin{array}{c}
				(x_{i,q_j},\mathrm{C}{]}\,, \qquad \mbox{if} \quad x_{i,q_j}< C\\
				{[}\mathrm{C},x_{i,q_j})\,, \qquad \mbox{if} \quad x_{i,q_j}> C \\
			\end{array}
		\right.\,.
\end{equation}
}
\end{itemize}

\end{definition}

{\bf Remark:} If we did not take point (ii) into account, then $f^q$ would not be monotonic in $W_{q_{ij}}$. 

\vspace{.3cm}

By construction, all points $x \in W_{q_{ij}}$ reach $I_{P_i}$ in at most $q$ iterations (see Figs.~\ref{fig:figura1b}, \ref{fig:figura1c}, and \ref{fig:figura1d}).  That is, $f^l(W_{q_{ij}})=I_{P_i}$ for $l \leq q$.

\begin{definition}\label{def:WR}
We call
$\displaystyle W_{R_i}= \bigcup_j W_{q_{ij}}$  the {\tt $q$-capture interval of $I_{P_i}$}, as $I_{P_i}$ is captured after at most $q$ iterations. The interval
$\displaystyle W_R=\bigcup_{i} W_{R_i}=\bigcup_{i,j} W_{q_{ij}}$ is then the {\tt $q$-capture interval of the orbit $\{  S_i  \}_{i=1}^p$}.
\end{definition}

Observe that $W_{q_{ij}}$ can be the empty set for some values of $j$.


\section*{Theorems} \label{thms}

Once we know $W_R$, we can calculate the probability that a point picked uniformly at random from phase space is located in $W_R$. We can then calculate the probability that that point reaches a capture interval of $O_p$ in at most $q$ iterations.  We let $\mu (W_R)$ denote the measure of $W_R$, and we have the following theorem.

\begin{theorem}\label{th:Pr}
Let $O_p=\{ S_i \}_{i=1}^p$ be a stable $p$-periodic orbit of the system (\ref{eq-1}). Given an arbitrary point $x \in I$, the probability to reach a capture interval of $O_p$ after at most $q$ iterations is
\begin{equation}
	P_q=\frac{\mu (W_R)}{\mu (I)} \; = \; \frac{\mu (W_R)}{b-a}\,.
\end{equation}	
\end{theorem}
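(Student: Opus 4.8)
The plan is to reduce Theorem~\ref{th:Pr} to the set‑theoretic identity $W_R = N_q$, where $N_q = \{\, x \in I : f^{l}(x) \in I_P \text{ for some } l \le q \,\}$ is the set of initial conditions whose forward orbit enters the aggregate capture interval $I_P$ within $q$ steps, and then to invoke the definition of the uniform probability measure on $I=[a,b]$. Once $W_R = N_q$ is known, the event ``$x$ reaches a capture interval of $O_p$ in at most $q$ iterations'' is precisely $\{x \in W_R\}$; since $W_R$ is a finite union of intervals (hence Lebesgue measurable) and $\mu(I)=b-a$, a point drawn uniformly at random from $I$ satisfies this event with probability $\mu(W_R)/\mu(I) = \mu(W_R)/(b-a)$, which is the asserted value $P_q$. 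So the content of the theorem is the identity $W_R = N_q$, which I would establish by a double inclusion.

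The inclusion $W_R \subseteq N_q$ is essentially the remark following Definition~\ref{def:W}. For each $i$ and $j$ we have $W_{q_{ij}} \subseteq I_{q_j}$, and $f^{q}$ is monotonic on the interval of monotonicity $I_{q_j}$ by Definition~\ref{def:monotonicity}; moreover $W_{q_{ij}}$ is carved out as the $f^{q}$‑preimage of $I_{P_i}$ inside $I_{q_j}$, truncated at the critical point $C$ in case (ii) precisely so that the relation $f^{q}(W_{q_{ij}}) \subseteq I_{P_i}$ survives. Hence every point of $W_{q_{ij}}$ enters the capture subinterval $I_{P_i}$, and therefore $I_P$, in (exactly, and so at most) $q$ iterations; unioning over $i$ and $j$ gives $W_R \subseteq N_q$.

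For the reverse inclusion $N_q \subseteq W_R$, let $x$ satisfy $f^{l}(x) \in I_{P_i}$ with $l \le q$. The first step is to show this forces $f^{q}(x) \in I_{P_{i'}}$ for some index $i'$: using $f^{p}(x_i)=U_i$, $f^{p}(x_i')=U_i'$, $f(U_i')=U_i$ and the $p$‑periodicity of the saddle orbit $\{U_i\}$, one checks that $f$ carries each capture subinterval into another capture subinterval, the index shifting cyclically like the orbit $\{S_i\}$; applying $f^{q-l}$ to $f^{l}(x) \in I_{P_i}$ then lands in some $I_{P_{i'}}$. Thus $x$ lies in the $f^{q}$‑preimage of $I_{P_{i'}}$, and since the intervals of monotonicity cover $I$ we have $x \in I_{q_j}$ for some $j$. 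On $I_{q_j}$ this preimage is the interval described in Definition~\ref{def:W}: it is $A_{i'j}$ in case (i), and in case (ii) it is the sub‑arc of $A_{i'j}$ on one side of $C$, the point $C$ being a critical point of $f$, hence of $f^{q}$, and therefore belonging to the partition of monotonicity. In either case the piece containing $x$ is exactly $W_{q_{i'j}}$, so $x \in W_R$. This proves $W_R = N_q$ and hence the theorem.

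I expect the measure‑theoretic step to be routine and the real obstacle to be the reverse inclusion, and within it two points: the equivalence of ``the orbit of $x$ enters $I_P$ within $q$ steps'' with ``$f^{q}(x) \in I_P$'', which hinges on the cyclic forward behaviour of the capture subintervals under $f$; and the bookkeeping near the endpoints of the monotone branches of $f^{q}$, since a branch whose range terminates at an iterate of the critical value meets only part of a capture subinterval, so the case split of Definition~\ref{def:W} must be read as accounting for every such branch endpoint, not merely for the endpoint $C$.
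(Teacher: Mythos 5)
Your proof is correct in outline, and it is in fact more careful than the paper's own proof, which you should be aware of when comparing. The paper's argument only establishes the inclusion $W_R \subseteq N_q$ (your forward direction): it reads off from Definitions~\ref{def:W} and \ref{def:WR} that every $x\in W_R$ satisfies $f^l(x)\in I_P$ for some $l\le q$, adds a remark (via Lemma~\ref{lem:extrema}) on why fewer than $q$ iterations may suffice, and then simply identifies the event ``$x$ reaches a capture interval in at most $q$ iterations'' with the event ``$x\in W_R$'' without any argument for the converse. You have correctly spotted that the equality $P_q = \mu(W_R)/\mu(I)$ rests on the set identity $W_R = N_q$, not merely on $W_R\subseteq N_q$; without the reverse inclusion one can only conclude $P_q \ge \mu(W_R)/\mu(I)$.

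Your reverse inclusion is the genuinely new content relative to the paper, and it is the right idea: one must show that $f$ permutes the capture subintervals cyclically, so that ``$f^l(x)\in I_P$ for some $l\le q$'' is equivalent to ``$f^q(x)\in I_P$'', and then that the pieces $W_{q_{ij}}$ of Definition~\ref{def:W} exhaust $(f^q)^{-1}(I_P)$. Two cautions on that step. First, the forward invariance $f(I_{P_i}) \subseteq I_{P_{\sigma(i)}}$ does not follow formally from the two relations $f^p(x_i)=U_i$, $f^p(x_i')=U_i'$ alone; it uses that $I_{P_i}$ lies inside the immediate $f^p$-basin of $S_i$ and that $f$ sends immediate basins to immediate basins along the orbit. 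Your sketch invokes the right ingredients but would need to be spelled out (and note that the paper's parenthetical ``$f(U_i')=U_i$'' is itself suspect for $p>1$; it should read $f^p(U_i')=U_i$, and the relevant image is $U_{\sigma(i)}$). Second, your closing caveat about branch endpoints is well placed: Definition~\ref{def:W}(ii) as written truncates only at $C$, whereas a monotone branch of $f^q$ can terminate at any point $x_{iC}$ with $f^i(x_{iC})=C$, so the exhaustiveness of $\bigcup_{i,j}W_{q_{ij}}$ really does require reading Definition~\ref{def:W} against the partition of monotonicity of Definition~\ref{def:monotonicity}. In short: same conclusion as the paper, but you supply the half of the argument the paper leaves implicit, and the remaining work is to make the cyclic-invariance step rigorous.
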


\begin{proof}
From the definition (\ref{def:WR}) of $W_R$, all $x \in W_R$ satisfy $f^l(x) \in I_p$ for $l\leq q$.  There always exist values of $l<q$ such that $f^l(x) \in I_p$ because extrema of $f^l(x)$ that satisfy $l<q$ are necessarily also extrema of $f^q$, and points belonging to the latter set of extrema reach a capture interval of $O_p$ after at most $l$ iterations (see Lemma \ref{lem:extrema} below). Consequently, one reaches $I_p$ from $x \in W_R$ after at most $q$ iterations (and we note that it need not be exactly $q$ iterations). Thus, the probability to reach $I_p$ from an arbitrary point $x \in I$ after at most $q$ iterations (i.e., the probability that $x \in W_R$) is
\begin{equation}
	P_q=\frac{\mu (W_R)}{\mu (I)} \; = \; \frac{\mu (W_R)}{b-a}\,.
\end{equation}	
\end{proof}

\begin{corollary}
With the hypotheses of Theorem \ref{th:Pr}, the probability to reach a capture interval of $O_p$ in exactly $q$ iterations is
\begin{equation*}
	P_q - P_{q-1}\,.
\end{equation*}	
\end{corollary}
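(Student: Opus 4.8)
The plan is to prove the corollary by a direct application of the basic fact that ``at most $q$ iterations'' partitions cleanly into ``at most $q-1$ iterations'' together with ``exactly $q$ iterations.'' First I would note that the event $E_{\leq q}=\{x \in I : f^l(x) \in I_P \text{ for some } l \leq q\}$ satisfies $E_{\leq q-1} \subseteq E_{\leq q}$, so the event $E_{=q}$ of reaching a capture interval in exactly $q$ iterations (meaning $q$ is the smallest such $l$) is precisely the set difference $E_{\leq q} \setminus E_{\leq q-1}$. Since these two events are disjoint and their union is $E_{\leq q}$, additivity of the measure gives $\mu(E_{=q}) = \mu(E_{\leq q}) - \mu(E_{\leq q-1})$.

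Next I would translate this into probabilities using Theorem~\ref{th:Pr}. By that theorem, the probability that an arbitrary $x \in I$ lies in $E_{\leq q}$ is $P_q = \mu(W_R)/(b-a)$, where $W_R$ is the $q$-capture interval of the orbit; applying the same theorem with $q$ replaced by $q-1$ gives that the probability of $E_{\leq q-1}$ is $P_{q-1}$. Dividing the measure identity $\mu(E_{=q}) = \mu(E_{\leq q}) - \mu(E_{\leq q-1})$ by $b-a$ then yields that the probability to reach a capture interval of $O_p$ in exactly $q$ iterations equals $P_q - P_{q-1}$, as claimed.

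The one subtlety worth flagging — and the step I expect to require the most care — is the identification of ``exactly $q$'' with the set difference. This hinges on the monotonicity of the capture sets in $q$, i.e. on $W_R^{(q-1)} \subseteq W_R^{(q)}$ (using a superscript to indicate the dependence on the iteration count), which in turn rests on the observation already used in the proof of Theorem~\ref{th:Pr}: any point that reaches $I_P$ in at most $l < q$ iterations is associated with an extremum of $f^l$, and every such extremum is also an extremum of $f^q$, so that point is likewise captured within the $q$-capture interval. Granting that nesting, the complement-within-$E_{\leq q}$ of the ``at most $q-1$'' set is exactly the set of points whose minimal capture time is $q$, and the measure-theoretic bookkeeping is then immediate. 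No further machinery beyond finite additivity of $\mu$ and Theorem~\ref{th:Pr} is needed.
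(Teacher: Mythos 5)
Your proof is correct, and since the paper states this corollary without proof, yours is the natural filling-in: decompose the event ``reach $I_P$ in at most $q$ iterations'' into the disjoint union of ``at most $q-1$'' and ``exactly $q$,'' then apply additivity and Theorem~\ref{th:Pr}. One small remark: the nesting you flag as the delicate step is actually immediate at the level of the events $E_{\leq q}$ as you define them (``at most $q-1$'' trivially implies ``at most $q$''), and Theorem~\ref{th:Pr} already identifies $P_q$ with the probability of $E_{\leq q}$; the more substantive inclusion $W_R^{(q-1)}\subseteq W_R^{(q)}$ that you invoke via Lemma~\ref{lem:extrema} is really part of what justifies Theorem~\ref{th:Pr} itself, not an extra hypothesis the corollary needs. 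So your argument is sound, and if anything slightly over-cautious.
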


This answers the question of how long it takes to reach a capture interval of a $p$-periodic orbit from an arbitrary point. However, we also need to calculate $\mu (W_R)$. To do this, we need to understand the structure of $W_{q_{ij}}$. As the following lemma indicates, some of these subintervals are located where $f^q$ is monotonic and others contain extrema of $f^q$.

\begin{lemma}\label{lem:extrema}
If $f: I \rightarrow I$ is an unimodal $\mathcal{C}^0$ function with a critical point at $C$, then $f^q(x)$ has extrema
\begin{itemize}
\item[(i)]{at points for which $f^{q-1}(x)=C$;}
\item[(ii)]{at the same points at which $f^{q-1}(x)$ has extrema.}
\end{itemize}
\end{lemma}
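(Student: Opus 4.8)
The plan is to prove the lemma directly from the composition identity $f^q=f\circ f^{q-1}$, working entirely through monotonicity rather than derivatives, since the hypothesis is merely continuity (in particular $f$ need not be differentiable at $C$, as in the tent map). The standing assumption that $f$ is unimodal with a strict maximum at $C$ means that $f$ restricts to a strictly increasing homeomorphism on $[a,C]$ and a strictly decreasing homeomorphism on $[C,b]$. The guiding observation is topological: for a continuous, piecewise-monotone map the extrema are exactly the points where the monotonicity type changes, and between two consecutive extrema the map is strictly monotone. Thus $x_0$ is an extremum of $f^q=f\circ f^{q-1}$ iff $f\circ f^{q-1}$ fails to be monotone on every neighbourhood of $x_0$, and this can occur for only two reasons: either $f^{q-1}$ itself already fails to be monotone near $x_0$ (i.e. $x_0$ is an extremum of $f^{q-1}$), or $f^{q-1}$ is monotone near $x_0$ but its value there equals $C$, so that $f$ is being applied across its own turning point. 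This dichotomy is precisely parts (ii) and (i).

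Concretely I would organise the proof into three short steps. \emph{Step 1 (part (ii)).} Let $x_0$ be an extremum of $f^{q-1}$ and put $y_0=f^{q-1}(x_0)$. If $y_0<C$, then $f$ is strictly increasing near $y_0$, so applying the monotone map $f$ to the one-sided inequality defining the extremum of $f^{q-1}$ shows $x_0$ is an extremum of $f^q$ of the same type; if $y_0>C$, the same argument with $f$ decreasing near $y_0$ gives an extremum of the opposite type; if $y_0=C$, then $f^q(x)=f(f^{q-1}(x))\le f(C)=f^q(x_0)$ for all nearby $x$ because $C$ globally maximises $f$, so $x_0$ is a local maximum of $f^q$. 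In every case $x_0$ is an extremum of $f^q$.

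\emph{Step 2 (part (i)).} Suppose $f^{q-1}(x_0)=C$. If $x_0$ is also an extremum of $f^{q-1}$ we are done by Step 1, so assume $f^{q-1}$ is strictly monotone near $x_0$, say increasing (the decreasing case is symmetric). For $x$ slightly below $x_0$ we have $f^{q-1}(x)<C$, where $f$ is increasing, hence $f^q(x)<f(C)=f^q(x_0)$; for $x$ slightly above $x_0$ we have $f^{q-1}(x)>C$, where $f$ is decreasing, hence again $f^q(x)<f(C)=f^q(x_0)$. So $x_0$ is a strict local maximum of $f^q$.

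\emph{Step 3 (these exhaust the extrema, which is what Definition \ref{def:monotonicity} needs).} If $x_0$ is neither an extremum of $f^{q-1}$ nor a solution of $f^{q-1}(x_0)=C$, then $f^{q-1}$ is strictly monotone near $x_0$ and its value there lies in the interior of $[a,C]$ or of $[C,b]$, where $f$ is strictly monotone; the composition $f^q$ is therefore strictly monotone near $x_0$, so $x_0$ is not an extremum. Applied recursively in $q$, this gives the full set $A$ in Definition \ref{def:monotonicity}: at each level one adjoins the solutions of $f^{q-1}(x)=C$ to the extrema inherited from $f^{q-1}$. I do not expect any serious obstacle here; the only points requiring care are (a) phrasing everything via local strict monotonicity rather than a vanishing derivative, so that the argument is valid for $\mathcal{C}^0$ unimodal $f$, and (b) noting that under the paper's stronger assumption of a negative Schwarzian derivative $f^{q-1}$ has finitely many laps, so the recursion in Step 3 terminates and $P_{\mathrm{mon-r}}$ is a genuine finite partition.
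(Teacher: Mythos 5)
Your proof is correct and follows essentially the same route as the paper's: decompose $f^q=f\circ f^{q-1}$ and argue via local monotonicity of $f$ on either side of $C$. Two features of your write-up are genuine improvements over the paper's version, though, and are worth noting. First, your case split in Steps~1--2 is based on the position of $y_0=f^{q-1}(x_0)$ relative to $C$, which is the correct variable; the paper's case (ii.a) is phrased as ``$x\in J_L$ or $x\in J_R$,'' i.e.\ a condition on $x$ itself, which does not by itself determine whether $f$ is applied across its turning point (the relevant dichotomy is on $f^{q-1}(x)$, as your proof makes explicit). Your handling of the overlap case $y_0=C$ while $x_0$ is already an extremum of $f^{q-1}$ is also cleaner than the paper's terse (ii.b). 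Second, your Step~3 (that (i) and (ii) exhaust the extrema) is not in the paper's proof at all, yet it is exactly what Definition~\ref{def:monotonicity} and Lemma~\ref{lem:coordinates} tacitly rely on when they treat the list of critical points of $f^q$ as complete; including it closes a real gap in the exposition. The one caveat you already flag is the right one: finiteness of the partition at each level needs the unimodal/negative-Schwarzian hypothesis on $f$ (finitely many laps of $f^{q-1}$), since $\mathcal{C}^0$ alone would not preclude accumulation of extrema.
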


\begin{proof}

\begin{itemize}
\item[(i)]{For all $x \in I$ such that $f^{q-1}(x)=C$, we know that $f^q(x)= f(f^{q-1}(x))=f(C)$. Therefore, $f^q$ has an extremum because $f$ has an extremum.}
\item[(ii)]{Write $I= J_L \cup \{ \mathrm{C} \} \cup J_R$, where $J_L=(a,\mathrm{C})$ and $J_R=(\mathrm{C},b)$, so $f$ is a monotonic function on the intervals $J_L$ and $J_R$.

(ii.a) If $x \in J_L$ or $x \in J_R$ and the function $f^{q-1}(x)$ has an extremum, then we know that $f^{q-1}(x)$ is a monotonically increasing function on one side of $x$ and a monotonically decreasing function on the other. Consequently, $f^q(x)=f(f^{q-1}(x))$ is the composition of two monotonic functions ($f$ and $f^{q-1}$), both of which are increasing (or decreasing) on one side of $x$. On the other side of $x$, one of them is increasing and the other is decreasing.  Therefore, there is an extremum at $x$.

(ii.b) Otherwise, if $f^{q-1}(x)={C}$, then we see straightforwardly that $f^q$ has an extremum.
}
\end{itemize}
\end{proof}

We have just seen how to determine the locations of extrema of $f^q$.  
We also need to know the values that $f^q$ takes at these extrema.

As we will see below, if the system (\ref{eq-1}) has a stable $p$-periodic orbit  and $q>p$, then the values that $f^q$ takes at its extrema are the same as those that $f^p$ takes at its extrema. This makes it possible to calculate the subintervals $W_{q_{ij}}$ that are associated with extrema of $f^q$ by using $I_{P_{iC}}$ and the derivative of $f$.

\begin{lemma}\label{lem:coordinates}
 Let $O_p$ be a stable $p$-periodic orbit of the system (\ref{eq-1}). The coordinates of the extrema of $f^q$ (where $q>p$) are $(x_{iC},f^{q-i|_p}({C}))$, where $x_{iC}$ denotes the points $x \in I$ such that $f^i(x)={C}$, the index $i$ takes values of $i=0,1,\ldots,q-1$ (where we note that $f^0 \equiv Id$ is the identity map), and $q-i|_p=(q-i)mod \;p$.
\end{lemma}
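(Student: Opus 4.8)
The plan is to combine the recursive description of the extrema of $f^q$ from Lemma \ref{lem:extrema} with a straightforward induction on $q$, using the periodicity of the orbit $O_p$ to collapse the iteration count modulo $p$. First I would record the horizontal coordinates. By Lemma \ref{lem:extrema}, an extremum of $f^q$ sits either at a point where $f^{q-1}(x)=C$ or at a point that is already an extremum of $f^{q-1}$; unrolling this recursion down to $f^1$ (whose unique extremum is at $C$, i.e.\ $f^0(x)=C$ trivially when $x=C$), one sees that the extrema of $f^q$ are exactly the points $x$ with $f^i(x)=C$ for some $i\in\{0,1,\ldots,q-1\}$. These are the points I would call $x_{iC}$, so the horizontal coordinate claim is immediate from the definitions.

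Next I would compute the value $f^q(x_{iC})$ at such an extremum. Since $f^i(x_{iC})=C$, we have $f^q(x_{iC}) = f^{q-i}\bigl(f^i(x_{iC})\bigr) = f^{q-i}(C)$, so the only remaining task is to show $f^{q-i}(C) = f^{\,q-i|_p}(C)$ whenever $q>p$, where $q-i|_p = (q-i)\bmod p$. The key step here is the standard fact that for a map with a stable $p$-periodic orbit whose immediate basin contains a neighborhood of $C$ (which holds because $f$ has negative Schwarzian derivative, so by Singer's Theorem the unique stable orbit attracts $C$), the critical orbit $\{f^n(C)\}$ is eventually periodic with period $p$; more precisely, in the setting of this paper the periodic orbit $O_p$ is superstable-adjacent and one argues that $f^p(C)$ and $C$ lie in the same capture interval $I_{P_{i,C}}$, on which $f^p$ acts as the identity on the relevant pair of endpoints — actually the cleanest route is to invoke that $f^{n+p}(C)=f^n(C)$ for all sufficiently large $n$, and then note that because we are in the parameter window where $O_p$ is the attracting orbit "of" the critical point, this holds for all $n\ge 0$ after identifying $C$ with its image under one period. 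I would make this precise by using the hypothesis $q>p$ to guarantee $q-i$ can be reduced: writing $q-i = mp + s$ with $0\le s < p$, iterate $f^p(C)=C$ (in the relevant reduced sense) $m$ times to get $f^{q-i}(C)=f^{s}(C)=f^{\,q-i|_p}(C)$.

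The main obstacle I anticipate is precisely that last reduction: justifying $f^{q-i}(C)=f^{\,(q-i)\bmod p}(C)$ requires knowing that the forward orbit of the critical point is genuinely $p$-periodic (not merely eventually periodic after some transient), which is only true on the nose for superstable parameters and is otherwise true only up to the linearized dynamics inside the capture intervals. I expect the paper handles this by working "up to the capture interval $I_{P_{i,C}}$" — i.e.\ the statement about coordinates of extrema is really a statement about which capture interval each extremum's image lands in, and the derivative of $f$ is then used (as the lemma's surrounding text hints) to pin down the location within that interval. So in the write-up I would be careful to state that $f^{q-i|_p}(C)$ denotes the appropriate representative point in the orbit/capture-interval structure, and that the equality $f^{q-i}(C)=f^{q-i|_p}(C)$ holds because both points lie in the same capture interval $I_{P_{jC}}$ (with $j$ determined by $q-i|_p$) on which $f^p$ is a contraction toward $S_j$, so for the purpose of the geometric construction of $W_{q_{ij}}$ they are interchangeable. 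The rest of the argument — the induction bookkeeping on $i$ running over $0,\ldots,q-1$ and matching each $x_{iC}$ to its value — is routine.
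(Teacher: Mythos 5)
Your proposal follows essentially the same route as the paper's proof: unroll Lemma~\ref{lem:extrema} recursively to identify the horizontal coordinates of the extrema of $f^q$ as the points $x_{iC}$ with $f^i(x_{iC})=C$ for $i=0,\ldots,q-1$, compute the vertical coordinate via $f^q(x_{iC})=f^{q-i}\bigl(f^i(x_{iC})\bigr)=f^{q-i}(C)$, and then reduce the exponent $q-i$ modulo $p$. So in structural terms you have recovered the argument.

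Where you diverge from the paper is in what you do at the final step, and your instinct there is sound. The paper's own proof dispatches the modular reduction in one line: it asserts that because there is a point of $O_p$ near $C$, the sequence $\{C, f(C),\ldots,f^{p-1}(C)\}$ is $p$-periodic, hence $f^{q-i}(C)=f^{(q-i)\bmod p}(C)$. As you observe, this identity is \emph{exact} only when $C$ is itself a point of a period-$p$ orbit (a supercycle, which is indeed what appears in the paper's figures); for a generic stable $p$-periodic orbit the critical orbit only converges to $O_p$, so the claimed equality of vertical coordinates holds only approximately, or ``up to the capture interval $I_{P_{i,C}}$'' as you put it. You are right to flag this as the delicate point, and your proposed reading --- interpret the equality in the supercycle case, or more generally identify $f^{q-i}(C)$ with its representative $f^{(q-i)\bmod p}(C)$ lying in the same capture interval, which is all that the downstream Algorithm~\ref{alg-2} actually uses --- is the natural way to make the lemma rigorous. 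In short: same approach, but you noticed and partially repaired an imprecision that the paper's own proof glosses over.
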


\begin{proof}
According to Lemma \ref{lem:extrema}, the extrema of $f^q$ are
\begin{itemize}
\item[(i)]{$x \in I$ such that $f^{q-1}(x)={C}$;}
\item[(ii)]{$x \in I$ such that $f^{q-1}(x)$ is an extremum.}
\end{itemize}

It thus follows that the extrema of $f^{q-1}$ are
\begin{itemize}
\item[(iia)]{$x \in I$ such that $f^{q-2}(x)={C}$;}
\item[(iib)]{$x \in I$ such that $f^{q-2}(x)$ is an extremum.}
\end{itemize}

Repeating the process, we obtain that extrema of $f^q$ are located at $x_{iC}$, where $i=0,1,\ldots,q-1$. The value of $f^q$ at $x_{iC}$ is $f^q(x)=f^{q-i}(f^i(x))=f^{q-i}({C})$.

Because $O_p$ is a stable $p$-periodic orbit, there exists one point of $O_p$ near ${C}$ that is repated periodically after $p$ iterations.
Consequently, $\{{C}, f({C}), f^2 ({C}),\ldots,f^{p-1}({C})  \}$ is a periodic sequence and $f^{q-i}({C})=f^{q-i|_p}({C})$.
\end{proof}


\section*{Algorithms}\label{algs}


As we discussed above, Lemmas \ref{lem:extrema} and \ref{lem:coordinates} determine intervals of monotonicity (see definition \ref{def:monotonicity}), and they also make it possible to construct algorithms for calculating $W_{q_{ij}}$.

For these algorithms, we approximate $f^q$ by line segments in the subintervals in which $f^q$ is monotonic.  This approximation is very good unless one is extremely close to an extremum (see Fig.~\ref{fig:figura2}), and this is already the case even for relatively small $q$ (as we will demonstrate below).  
Additionally, recall that $W_{q_{ij}}$ is determined by the intersection points of $f^q$ with line segments. Therefore, once we have approximated $f^q$ by a set of line segments, it is straightforward to calculate those intersection points.

\vspace{1cm}

\begin{algorithm}\label{alg-1}
{\tt (Calculating coordinates for extrema of $f^q$)}  Suppose that we know the coordinates of the extrema of $f^{q-1}$. According to Lemma \ref{lem:extrema}, the extrema of $f^q$ are located at the points
\begin{itemize}
\item[(i)]{$x \in I$ such that $f^{q-1}(x)={C}$ and $f^{q-1}(x)$ is not an extremum;}

\item[(ii)]{$x \in I$ such that $f^{q-1}(x)$ is an extremum.}
\end{itemize}


We know the extrema in (ii) by hypothesis. To find the extrema in (i), we need to calculate the points $x \in I$ that satisfy $f^{q-1}(x)={C}$. Because we know the coordinates of extrema of $f^{q-1}$, we construct the lines that connect two consecutive extrema (see Fig.~\ref{fig:fig_example}). Let $x_{n+1}=a x_n +b$ be the equation for such a line. We solve $a x_n +b = {C}$ to obtain a seed that we can use in any of the many numerous numerical methods for obtaining roots of nonlinear algebraic equations. Observe that $f^q$ is monotonic in the interval in which the line  $x_{n+1}=a x_n +b$ is defined. This circumvents any problem that there might otherwise be in obtaining a good seed to ensure convergence of the root solver. Moreover, we have as many seeds as there are points $x \in I$ that satisfy $f^{q-1}(x)={C}$. Note that we need to construct both the line that connects $(a,f(a))$ with the first extremum of $f^{q-1}$ and the line that connects $(b,f(b))$ with the last extremum of $f^{q-1}$.

To calculate the points $x \in I$ for which $f^{q-1}(x)$ is an extremum, we apply this algorithm recursively, and we note that we know by hypothesis that $f$ has an extrememum at $C$.
 We first build the line segments that connect $(a,f(a))$ with $({C},f({C}))$ and $({C},f({C}))$ with $(b,f(b))$. These two line segments give seeds from which to determine the points $x \in I$ that satisfy $f(x)={C}$.  We thereby obtain the coordinates for the extrema of $f^2$.  We then use the same procedure to obtain the coordinates for extrema of $f^3$, $f^4$, $\ldots$ , $f^q$.
\end{algorithm}

\vspace{1cm}

We will see below that if the system (\ref{eq-1}) has a stable $p$-periodic orbit and $q \gg p$, then the points $x \in I$ with $f^{q-1}(x)={C}$ are given to a very good approximation by the intersection points of two lines.  Moreover, as one can see in Fig.~\ref{fig:figura2}, the value $q=6$ is already large enough to approximate $f^q$ very successfully by a set of line segments when $f$ is the logistic map.

\vspace{1cm}

\begin{algorithm}\label{alg-2}
{\tt (Calculation of $W_{q_{ij}}$ in the system} (\ref{eq-1}){\tt)} Suppose that we know the coordinates of the extrema of $f^q$ (e.g., by computing them using Algorithm \ref{alg-1}). We want to obtain $W_{q_{ij}}$ from the definition (\ref{def:W}), where 
\begin{equation}
	f^q(x_{i,q_j})=x_i\,, \qquad f^q(x_{i,q_j}^{\prime})=x_i^{\prime}
\end{equation}
and the $i$th capture interval is
\begin{equation}
	I_{P_i} = 
		\begin{cases}
			(x_i,x_i^{\prime})\,,\quad \mbox{if} \qquad x_i<x_i^{\prime}\,, \\
			(x_i^{\prime},x_i)\,,\quad \mbox{if} \qquad x_i>x_i^{\prime}\,.
		\end{cases}
\end{equation}

To determine the points $x_{i,q_j}$ and $x_{i,q_j}^{\prime}$, we first approximated them by replacing $f^q$ by line segments that connect consecutive extrema of $f^q$ (i.e., by the same procedure that we use in Algorithm \ref{alg-1} to obtain approximations of points). Using the approximations of $x_{i,q_j}$ and $x_{i,q_j}^{\prime}$, we construct the interval $I_{\mathrm{app}}=(x_{i,q_j},x_{i,q_j}^{\prime})$ and then check if there is an extremum of $f^q$ in $I_{\mathrm{app}}$. (This is trivial because we know the coordinates of the extrema of $f^q$.) We need to consider two cases.
\begin{itemize}
\item[(i)]{The map $f^q$ has no extrema in $I_{\mathrm{app}}$. This is equivalent to case (i) of Algorithm \ref{alg-1}. We use the approximations of $x_{i,q_j}$ and $x_{i,q_j}^{\prime}$ as seeds in a numerical root-finding method.}
\item[(ii)]{The map $f^q$ has extrema in $I_{\mathrm{app}}$. This is equivalent to case (ii) of Algorithm \ref{alg-1}. 
}
\end{itemize}

If there is an extremum of $f^q$ in $I_{\mathrm{app}}$, then that extremum is necessarily one of the extrema given by Lemma \ref{lem:coordinates}: $(x_{iC},f^{q-i|_p}(\mathrm{C}))$. Because $f^i(x_{iC})={C}$ and $f$ is a continuous function, there must exist an interval $I_{iC}$ such that $x_{iC} \in I_{iC}$ and $f^i(I_{iC}) \subset I_{P_{i,C}}$

Taking into account that $x_{iC}$ is known, we construct the sequence 
\begin{equation*}
	S_{iC}=\{x_{i,0},x_{i,1},\ldots,x_{i,i} \equiv \mathrm{C}   \}\,,
\end{equation*}	
where $x_{iC} \equiv x_{i,0}$, $x_{i,k}=f^k(x_{iC})$, and $f^0(x_{iC})=x_{iC}$\\

Let $L_{i,k}$ be the linear map whose graph is the line of slope $f^{\prime}(x_{i,k})$ that intersects the point $x_{i,k}$. If the period $p$ of the orbit is sufficiently large, then we can approximate $f$ near $x_{i,k}$ (where $k=0,1,\ldots,i-1$) by the linear map $L_{i,k}$. Thus, instead of iterating $I_{iC}$ with the map $f$ to obtain $I_{P_{i,C}}$, we iterate $I_{iC}$ with the linear map $L_{i,k}$ that approximates $f$.  That is,
\begin{equation*}
	I_{P_{i,C}} \approx L_{i,i-1} \ldots L_{i,0} (I_{iC})\,.
\end{equation*}
Because each $L_{i,k}$ is a linear map, it is straightforward to compute $L^{-1}_{i,k}$ and hence to compute
\begin{equation*}
	I_{iC} \approx L^{-1}_{i,0} \ldots L^{-1}_{i,i-1} (I_{P_{i,C}})\,.
\end{equation*}
At the end of this section, we will discuss the error that is introduced by this approximation.

The interval $I_{iC}$ that we have just constructed is the interval 
\begin{equation}
	W_{q_{ij}}= 
\left\{
	\begin{array}{c}
		(x_{i,q_j},\mathrm{C}{]}\,, \quad \mbox{if} \qquad \mbox{if} \; x_{i,q_j}<{C}\\
		{[}\mathrm{C},x_{i,q_j})\,, \quad \mbox{if} \qquad \mbox{if} \; x_{i,q_j}>{C}\\
	\end{array}
\right.
\end{equation}
that we seek.
\end{algorithm}

\vspace{1cm}

In Algorithm \ref{alg-1}, we constructed line segments that connect two consecutive extrema of $f^q$. They are located in the intervals $[q_j,q_{j+1})$ and $[q_{j+1},q_{j+2})$, respectively. We now have intervals $W_{q_{i,j}} \subset[q_j,q_{j+1})$ and  $W_{q_{i,j+1}} \subset[q_{j+1},q_{j+2})$ that contain these two consecutive extrema of $f^q$,
 so we construct the line segment that connects the upper endpoint of $W_{q_{i,j}}$ to the lower endpoint of $W_{q_{i,j+1}}$.  (Note that we \emph{do not} connect the two extrema directly via a line segment.)  For $q \gg 1$, this line segment approximates $f^q$ outside of the intervals $W_{q_{i,j}}$ and $W_{q_{i,j+1}}$. See Fig.~\ref{fig:figura2}, which illustrates (for the case when $f$ is the logistic map) that we can approximate $f^6$ by a set of line segments for $q=6$. We can then use these line segments in Algorithm \ref{alg-1}, and we do not need numerical computations to find the intersection points. 

As we discussed previously, we can replace $f^q$ by linear expressions to approximate the intersection points when determining $W_R$ in Algorithms 1 and 2. Replacing $f^q$ by a linear approximation simplifies operations and reduces the amount of calculation. To determine the desired intersection points, we have thereby replaced a numerical method for obtaining roots of nonlinear algebraic equations by an analytical calculation that uses a system of two linear equations.   
We now estimate the error of replacing $f^q$ by lines segments. The line segments that replace the function $f^q$ intersect $f^q$ very close to the unique point of inflection between a pair of consecutive extrema of $f^q$ (see Remark \ref{rem:schw_der} and Fig.~\ref{fig:fig_example}). The Taylor polynomial of degree 3 
of $f^q$ around the inflection point $x_{\mathrm{inf}}$ is 
\begin{align*}
	f^q(x)\simeq f^q(x_{\mathrm{inf}}) + f^{q\prime}(x_{\mathrm{inf}})(x - x_{\mathrm{\mathrm{inf}}}) + \frac {1}{3!} f^{q\prime\prime\prime} (x_{\mathrm{inf}})(x - x_{\mathrm{inf}})^3\,. 
\end{align*}	
Consequently, the error of approximating $f^q$ by the line $f^q(x_{\mathrm{inf}}) + f^{q\prime}(x_{\mathrm{inf}})(x - x_{\mathrm{inf}})$ is
\begin{align}\label{decrease}
	\mbox{Error} = \left|\frac {1}{3!} f^{q\prime\prime\prime} (x_{\mathrm{inf}})(x - x_{\mathrm{inf}})^3\right| \approx \left|\frac {1}{3!} f^{q\prime\prime\prime} (x_{\mathrm{inf}})\left(\frac {b-a}{2^q}\right)^3\right|\,,
\end{align}	
where we have taken into account that there are more than $2^q$ local extrema of $f^q$ in the interval $[a,b]$. The exponential growth of $2^q$ enforces a fast decay in the error. Consequently, using line segments to approximate $f^q$ is an effective procedure with only a small error.




\section*{Numerical Example}\label{numer}

Algorithms \ref{alg-1} and \ref{alg-2} are based on the same procedure: approximate $f^q(x)$ by a line $y(x)= a x + b$ and solve $y(x)=C$ to obtain an approximation of the $f^q(x)= C$ (instead of solving $f^q(x)= C$ directly).  In this section, we consider an example application of Algorithm \ref{alg-1}. 

To obtain the critical points of $f^{q+1}$, we need to calculate the points that satisfy $f^q=C$. Suppose that $q=6$ (and again see Fig.~\ref{fig:figura2} for an illustration of the line-segment approximation with $q = 6$ for the logistic map).  The biggest distance between consecutive extrema occurs near the critical point $C$, so we approximate $f^6$ by a line segment in this region to obtain an upper bound for the error. The extrema are located at  $(4.525 \times 10^{-1},2.414 \times 10^{-3})$ and $(4.787 \times 10^{-1},9.994 \times 10^{-1})$, and they are connected by the line $y \approx38.053  \;x - 16$, from which we obtain the approximation $x_{\mathrm{app}} \approx 0.453$ for the solution of $f^6(x)=C$.  From direct computation, the value of $x$ that satisfies $f^6(x)=C$ is $x \approx 0.465$. The relative error is $E_{\mathrm{rel}}\approx 2.58\%$, and this is the largest error in this example from all of the approximating lines segments.  As we showed in equation (\ref{decrease}), the error decreases exponentially. Hence, when we approximate $f^{6+m}$ using line segments, the relative error will be bounded above by $E_{\mathrm{rel}} \approx 2.58/2^m \; \%$.  One can observe this decrease in error in Fig.~\ref{fig:fig_example}, in which we plot both $f^6$ and $f^{10}$ for the logistic map and the same parameter value $r$. Observe that several extrema of $f^{10}$ lie between onsecutive extrema of $f^6$, so using the line-segment approximation in $f^{10}$ induces a much smaller error than using it in $f^6$.


\section*{Conclusions and Discussion}\label{conc}

When studying dynamical systems, it is important to consider not only whether one converges to periodic orbits but also how long it takes to do so.  We show how to do this explicitly in one-dimensional discrete dynamical systems governed by unimodal functions.
We obtain theoretical results on this convergence and develop practical algorithms to exploit them.  These algorithms are both fast and simple, as they are linear procedures. One can also apply our results to multimodal one-dimensional maps by separately examining regions of parameter space near each local extremum.

Although we have focused on periodic dynamics, the ideas that we have illustrated in this paper can also be helpful for trying to understand the dynamics of chaotic systems.  Two important properties of a chaotic attractor are that (i) its skeleton can be constructed (via a ``cycle expansion") by considering a set of infinitely many unstable periodic orbits; and (ii) small neighborhoods of the unstable orbits that constitute the skeleton are visited ergodically by dynamics that traverse the attractor \cite{chaosbook}.  In Refs.~\cite{smelch1,smelch2}, Schmelcher and Diakonos developed a method to detect unstable periodic orbits of chaotic dynamical systems. They transformed the unstable periodic orbits into stable ones by using a universal set of linear transformations.  One could use the results of the present paper after applying such transformations.  Moreover, the smallest-period unstable periodic orbits tend to be the most important orbits for an attractor's skeleton \cite{chaosbook}, so our results should provide a practical tool that can be used to help gain insights on chaotic dynamics.

Once unstable orbits has been transformed into stable ones we can use results of this paper to answer the above question.


\section*{Acknowledgements}

We thank Erik Bollt, Takashi Nishikawa, Adilson Motter, Daniel Rodr\'iguez, and Marc Timme for helpful comments.


\bibliography{jesus}


\section*{Figure Legends}


\begin{figure}[!ht] 
  \centering
  \includegraphics[width=0.8\textwidth]{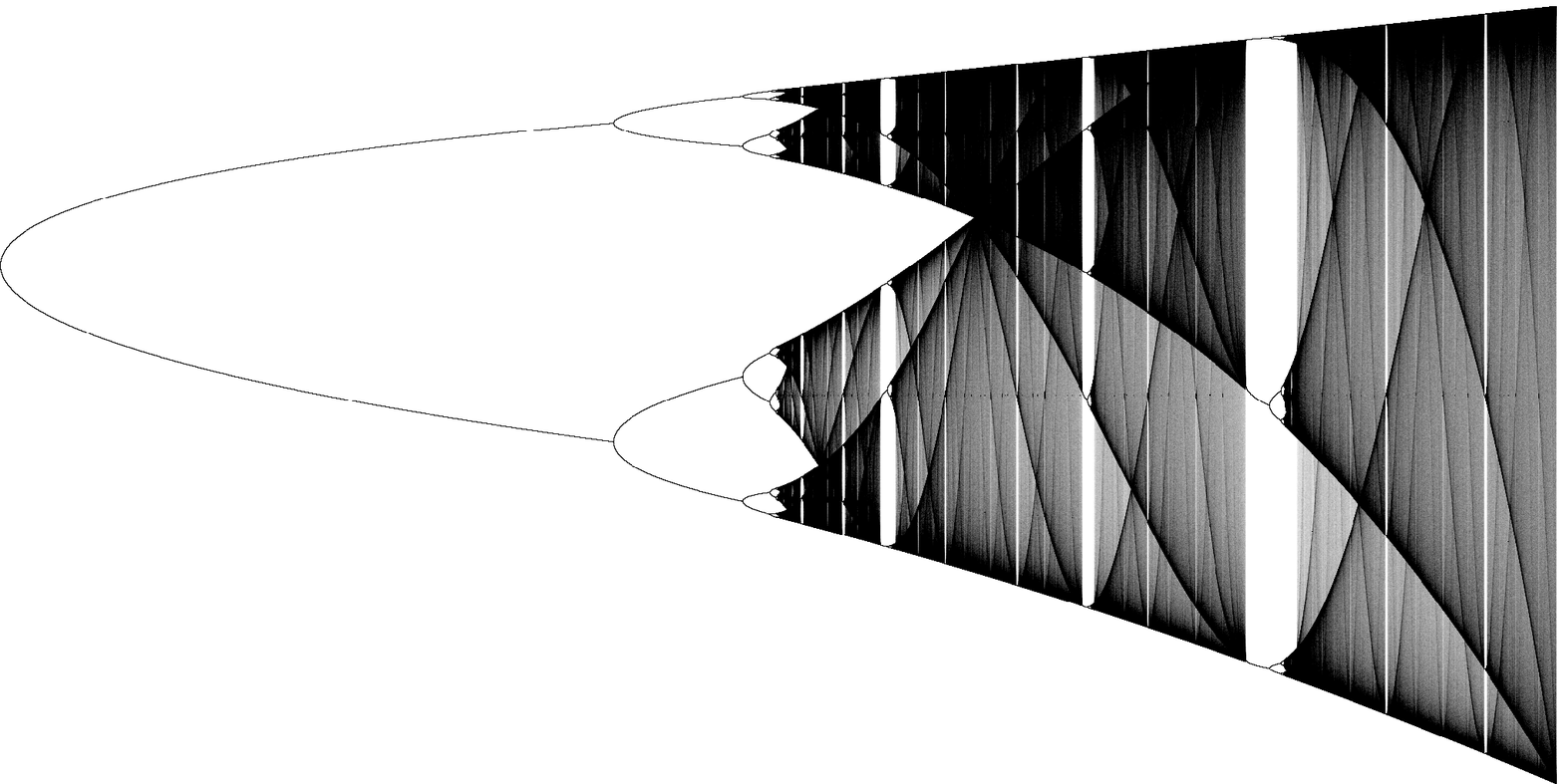}
  \caption{{\bf Bifurcation diagram of a unimodal map with a negative Schwarzian derivative.} There is a period-doubling cascade on the left, and there are also period-doubling cascades inside several windows (the broad, clear bands) of periodic behavior. Saddle-node orbits arise at the onset of such windows in the chaotic area.
  }
  \label{fig:diagrama}
\end{figure}

\begin{figure}[!ht] 
  \centering
  \includegraphics[bb=0 0 397 278,width=5.67in,height=3.97in,keepaspectratio]{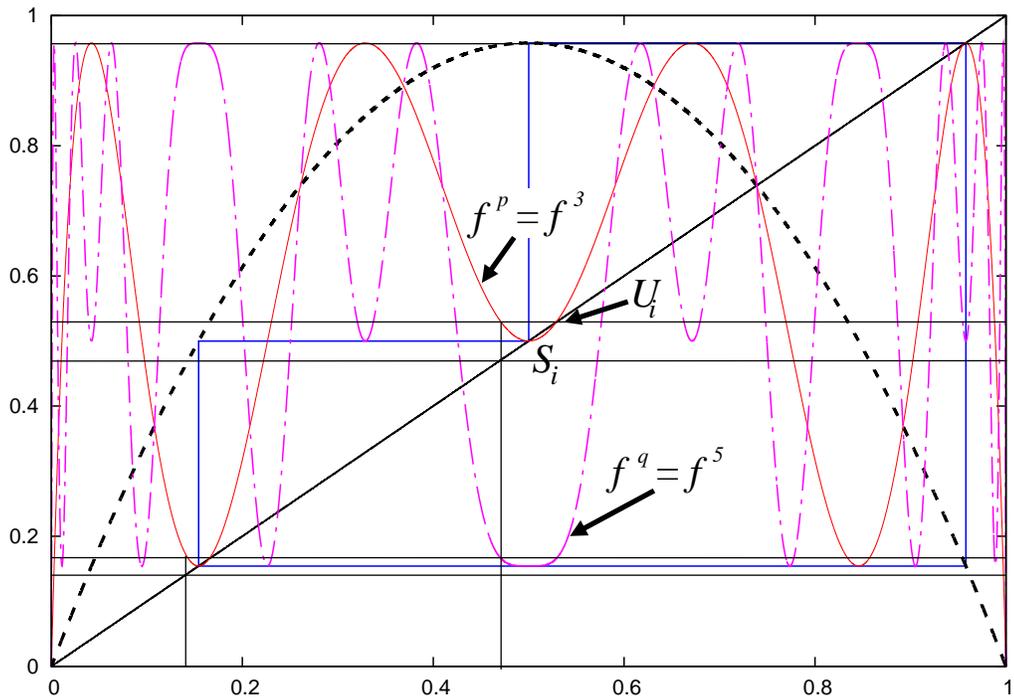}
  \caption{{\bf Geometric calculation of the three subintervals $W_{q_{ij}}$ corresponding to a 3-periodic orbit (in blue). See Fig.~\ref{fig:figura1a} for a better view of the orbit.}   
            These subintervals are determined by the three pairs of black, horizontal, parallel line segments that intersect $f^q$, $U_ {i}$, and $f^p$.
               (We only indicate one $U_i$ in the figure.)  One needs to take into account the intersection points of $f^q$ with all 6 parallel line segments. See Figs.~\ref{fig:figura1c} and \ref{fig:figura1d} for more detail.  The plot in this figure uses the logistic map.  The blue orbit is a period-3 supercycle and $r \approx 3.83187405528331556841$. 
}
  \label{fig:figura1b}
\end{figure}

\begin{figure}[!ht] 
  \centering
  \includegraphics[bb=0 0 397 278,width=5.67in,height=3.97in,keepaspectratio]{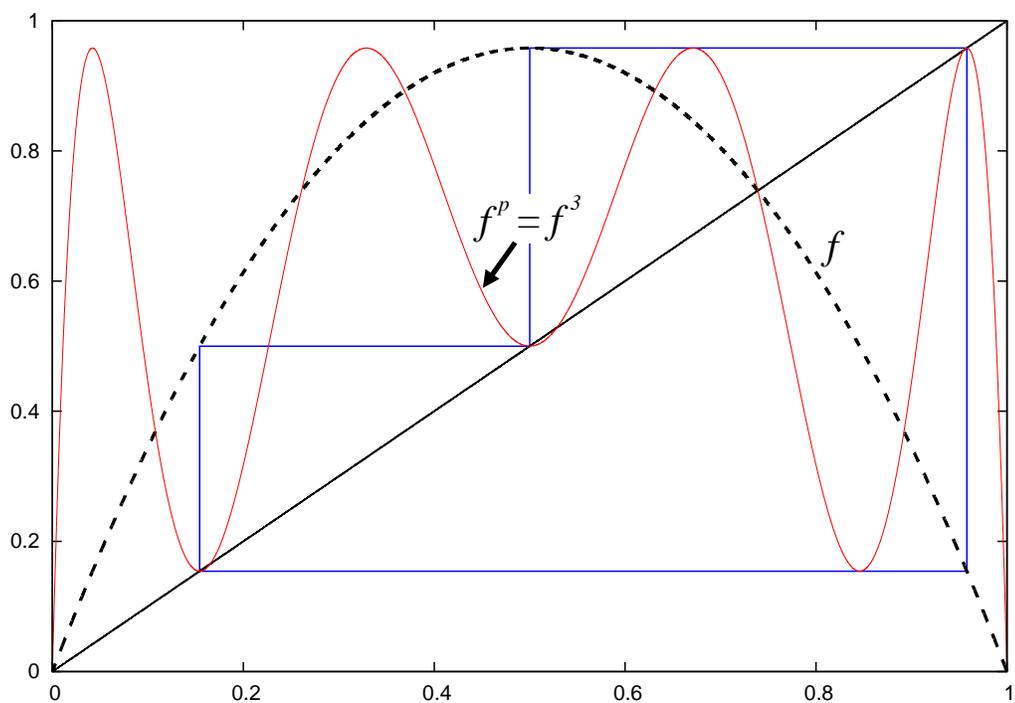}
 \caption{{\bf The 3-periodic orbit $f^p=f^3$ from Fig.~\ref{fig:figura1b}.}}
  \label{fig:figura1a}
\end{figure}

\begin{figure}[!ht] 
  \centering
  \includegraphics[bb=0 0 397 278,width=5.67in,height=3.97in,keepaspectratio]{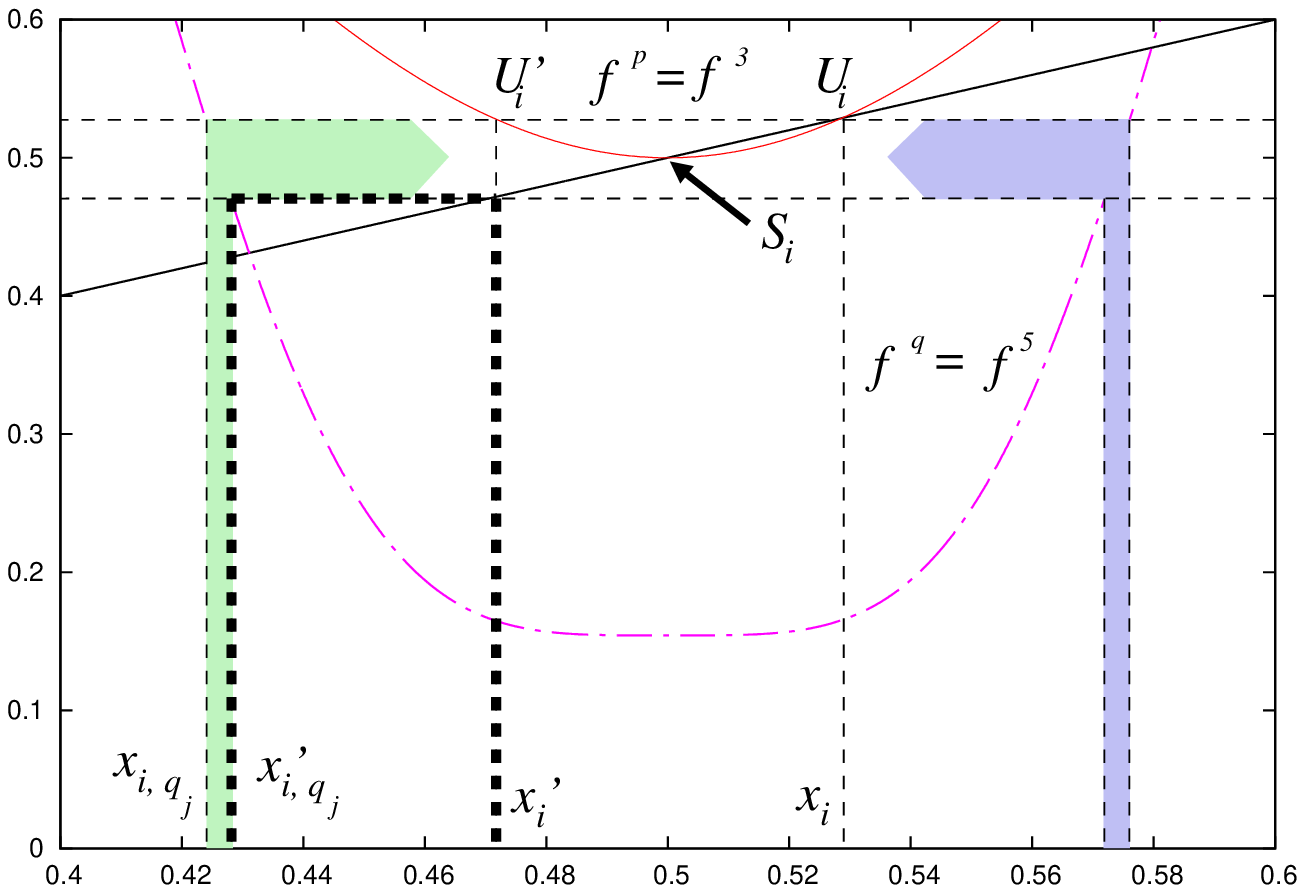}
  \caption{{\bf Magnification of Fig.~\ref{fig:figura1b}.} We show the interval $W_{q_{ij}}$, in which $f^q$ does not have any extrema in the region between the horizontal parallel lines. The horizontal line that crosses $U_i$ and intersects with $f^p$ determines $U_i^{\prime}$. The vertical lines that intersect $U_i$ and $U_i^{\prime}$ determine $x_i$ and $x_{i}^{\prime}$, respectively. We obtain locations for the points $x_{i,q_j}$ and $x_{i,q_j}^{\prime}$ because their images under the map $f^q$ are $x_i$ and $x_{i}^{\prime}$, respectively.  We thereby construct the subinterval $W_{q_{ij}}$. 
  We depict the mapping of the subinterval $W_{q_{ij}}$ using a filled green arrow the mapping of another subinterval using the filled blue arrow.
           }
  \label{fig:figura1c}
\end{figure}

\begin{figure}[!ht] 
  \centering
  \includegraphics[bb=0 0 394 276,width=5.67in,height=3.97in,keepaspectratio]{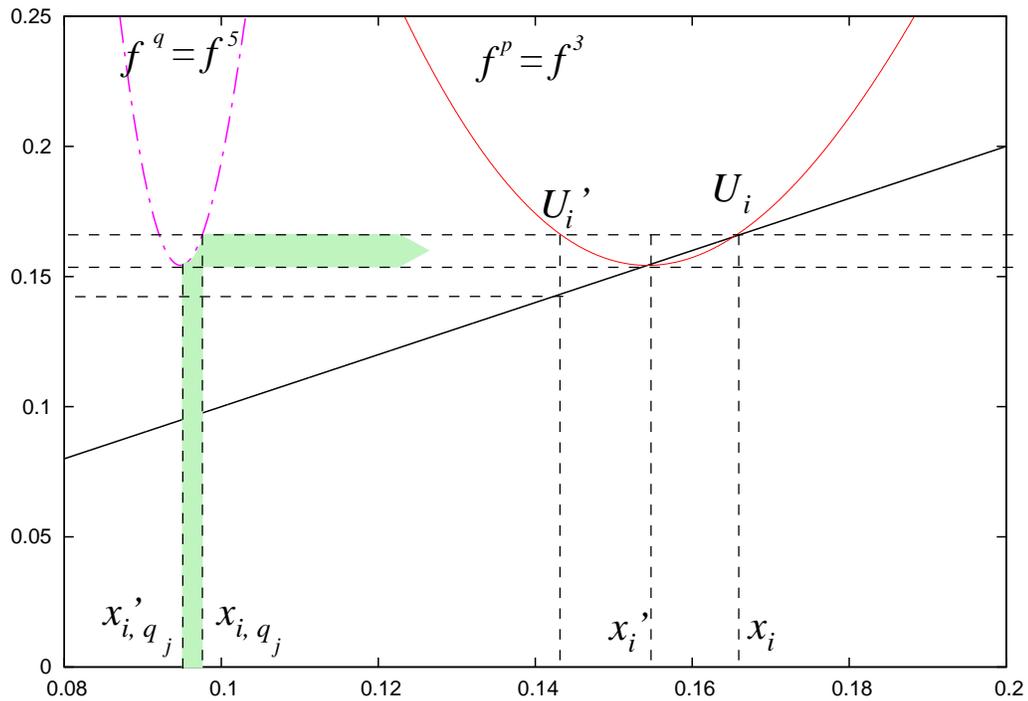}
  \caption{{\bf Another magnification of Fig.~\ref{fig:figura1b}.}  We show the interval $W_{q_{ij}}$,  in which $f^q$ has an extremum in the region between the horizontal parallel lines. The horizontal line that crosses $U_i$ and intersects $f^p$ determines $U_i^{\prime}$. The vertical lines that intersect $U_i$ and $U_i^{\prime}$ determine the points $x_i$ and $x_{i}^{\prime}$, respectively. We obtain the locations for the points $x_{i,q_j}$ and $x_{i,q_j}^{\prime}$ because their images under the map $f^q$ are $x_i$ and $x_{i}^{\prime}$, respectively.  We thereby construct the subinterval $W_{q_{ij}}$. We depict the mapping of the subinterval $W_{q_{ij}}$ using a filled green arrow.
  }
  \label{fig:figura1d}
\end{figure}

\begin{figure}[!ht] 
  \centering
  \includegraphics[width=5.67in,height=3.97in,keepaspectratio]{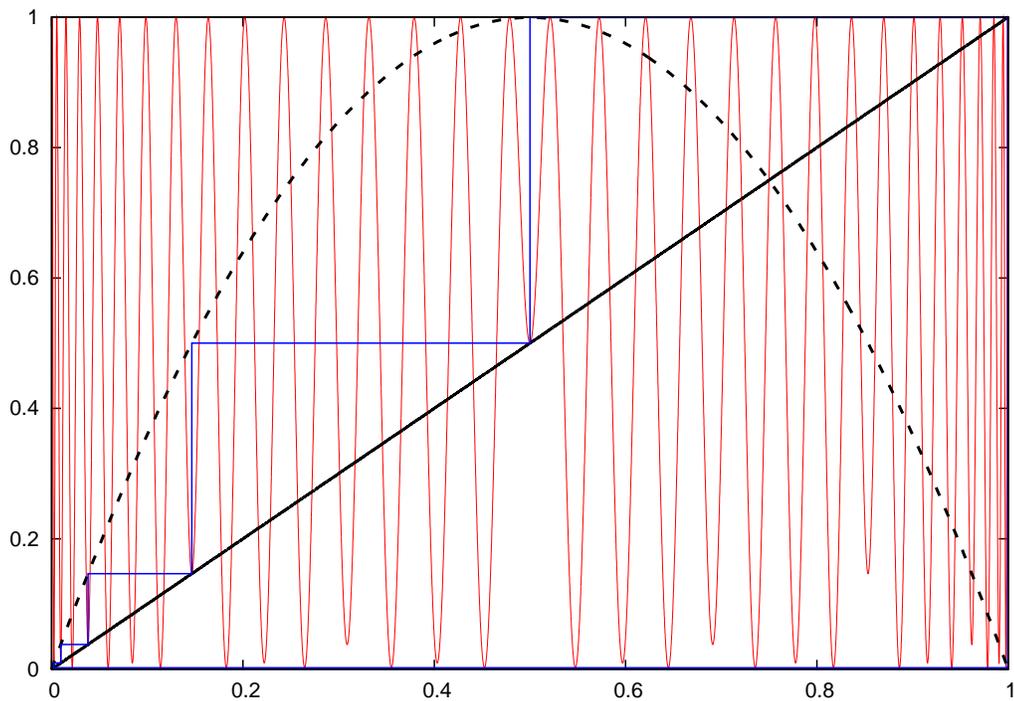}
  \caption{{\bf Outside of the intervals $W_{q_{i,j}}$, we approximate the map $f^q$ using line segments.} A line segments connects the upper endpoint of the interval $W_{q_{i,j}}$ to the lower endpoint of $W_{q_{i,j+1}}$.   The map $f^6$ is very well approximated using line segments as long as one is not too close to an extremum.  We again use the logistic map to illustrate our procedure.  The blue curve is a period-6 supercycle and $r \approx 3.99758311825456726610$. See Fig.~\ref{fig:fig_example} for a magnification of this figure.
  }
  \label{fig:figura2}
\end{figure}

\begin{figure}[!ht] 
  \centering
  \includegraphics[width=0.8\textwidth]{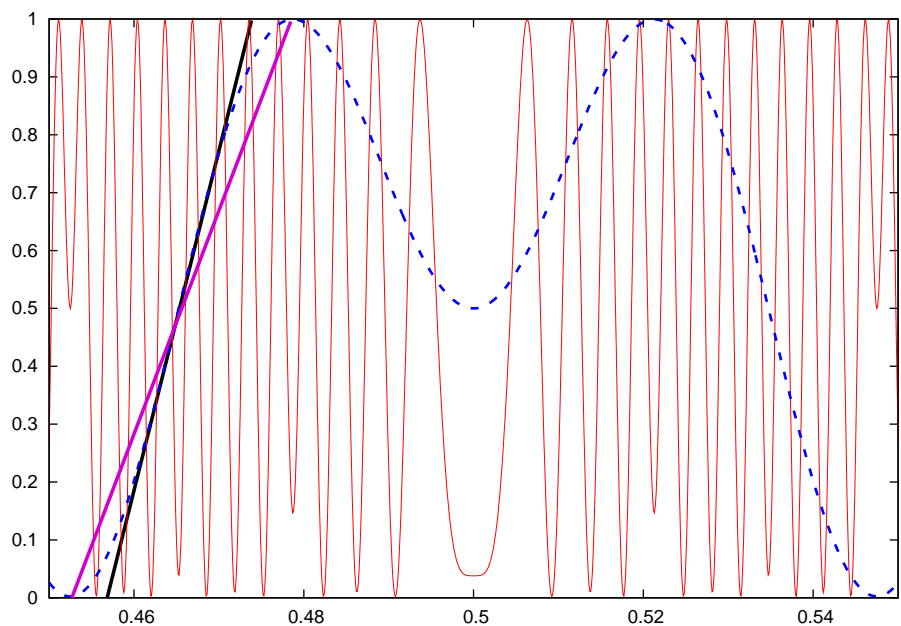} 
  \caption{{\bf Graphs of $f^6$ (blue) and $f^{10}$ (red) for the same value of the parameter $r$ (when $f$ is the logistic map) as in Fig.~\ref{fig:figura2}.} The dark pink line joins two consecutive extrema of $f^6$, and the black line is the tangent line that crosses through the inflection point. Both lines are approximations to $f^q$. As expected, the approximation is better for the larger value of $q$.
  }
  \label{fig:fig_example}
\end{figure}



\end{document}